\newtheorem{thm}{Theorem}[section]
 \newtheorem{lem}[thm]{Lemma}
\newtheorem{alphthm}{Theorem}			
 \theoremstyle{definition}
  \newtheorem{defn}[thm]{Definition}
 \theoremstyle{remark}
   \newtheorem{claim}[thm]{Claim}
\newtheorem*{claim*}{Claim}
\def\NN{\mathbb N}
\def\ZZ{\mathbb Z}
\def\diam{\mathrm{diam}}
\def\T{\mathcal{T}}
\DeclareMathOperator{\supp}{supp}
\newcommand{\norm}[1]{\|#1\|}
\begin{document}

\title{Building weight-free F\o{}lner sets for Yu's Property A in coarse geometry}

\author{Graham A. Niblo, Nick Wright and Jiawen Zhang}

\address[Graham A. Niblo]{School of Mathematical Sciences, University of Southampton, Highfield, SO17 1BJ, United Kingdom.}
\email{g.a.niblo@soton.ac.uk}

\address[Nick Wright]{School of Mathematical Sciences, University of Southampton, Highfield, SO17 1BJ, United Kingdom.}
\email{n.j.wright@soton.ac.uk}

\address[Jiawen Zhang]{School of Mathematical Sciences, Fudan University, 220 Handan Road, Shanghai, 200433, China.}
\email{jiawenzhang@fudan.edu.cn}

\date{28/08/2024}
\subjclass[2010]{}
\keywords{Coarse geometry, Yu’s property A, amenability, uniformly finite homology, Følner sets, Rips complex, flow, naive property A}

\thanks{JZ was partly supported by National Key R{\&}D Program of China 2022YFA100700.}

\baselineskip=16pt

\begin{abstract}
In this note we study the natural question of when the generalised F\o lner sets exhibiting property A can be chosen to be subsets of the space itself. We show that for many property A spaces $X$, this is indeed possible. Specifically this holds: for all discrete bounded geometry metric spaces which coarsely have all components unbounded; for all countable discrete groups; and for all box spaces.
\end{abstract}

\maketitle

\section{Introduction}

The notion of property A is a coarse geometric property introduced by Yu in \cite{Yu00}, as a coarse analogue to the notion of amenability which implies the coarse Baum-Connes conjecture. The coarse Baum-Connes conjecture provides an algorithm to compute the $K$-theories of Roe algebras, which serve as receptacles for higher indices of elliptic differential operators on open manifolds (see \cite{Roe88}), and has fruitful applications in geometry and topology (see, \emph{e.g.}, \cite{Roe96, WY20}).

Recall that a discrete metric space $(X,d)$ has \emph{property A} if for any $R,\varepsilon>0$ there exists a family $\{A_x\}_{x\in X}$ of finite, non-empty subsets of $X \times \NN$ satisfying:
\begin{enumerate}
  \item for any $x,y\in X$ with $d(x,y) \leq R$, we have
  $\frac{|A_x \triangle A_y|}{|A_x \cap A_y|} < \varepsilon$;
  \item there exists an $S>0$ such that for any $x\in X$ and $(y,n)\in A_x$, then $d(x,y)\leq S$.
\end{enumerate}

This should be compared with the definition of amenability for countable discrete groups in terms of F\o lner sets: a group $G$ is \emph{amenable} if for any $R,\varepsilon>0$ there exists a finite, non-empty subset $A$ of $G$ satisfying:
\begin{enumerate}
  \item for any $x,y\in G$ with $d(x,y) \leq R$, we have
  $\frac{|xA \triangle yA|}{|xA \cap yA|} < \varepsilon$;
  \item there exists an $S>0$ such that for any $x\in G$ and $y\in xA$, then $d(x,y)\leq S$.
\end{enumerate}
Here $xA:=\{xg: g\in A\}$.
Note that the second condition holds vacuously in this case since $A$ is finite, however we include it here to emphasise the parallel between the definitions. It is required in the definition of property A as a substitute for equivariance.

Beyond the weakening of equivariance, there is another obvious difference in the definition of property A, namely the fact that property A allows the generalised F\o lner sets to be subsets of $X\times \NN$.  We may think of this as allowing a single point to be included multiple times in the same set. This plays a key role in allowing for non-injective maps between coarse spaces. In particular in the proof that property A is inherited by subspaces (see \cite{Wil09b, Yu00}), one needs to be able to retract neighbourhoods onto the subspace. 

A large class of groups and spaces are known to have property A (see the survey article \cite{Wil09b}), \emph{e.g.}: amenable groups, linear groups, hyperbolic spaces, CAT(0) cube complexes and spaces of finite asymptotic dimension. On the other hand, there are examples where property A fails, \emph{e.g.}: expander graphs and certain box spaces of free groups constructed by Arzhantseva, Guentner and \v{S}pakula in \cite{AGS12}.

We observe that some of these cases (\emph{e.g.}: amenable groups and hyperbolic spaces) satisfy the following simpler version of property A:

\begin{defn}\label{defn:naive property A}
A discrete metric space $(X,d)$ has \emph{naive property A} if for any $R,\varepsilon>0$ there exists a family $\{A_x\}_{x\in X}$ of finite and non-empty subsets of $X$ satisfying:
\begin{enumerate}
  \item for any $x,y\in X$ with $d(x,y) \leq R$, we have
  $\frac{|A_x \triangle A_y|}{|A_x \cap A_y|} < \varepsilon$;
  \item there exists an $S>0$ such that for any $x\in X$ and $y\in A_x$, then $d(x,y)\leq S$.
\end{enumerate}
\end{defn}

Note that the only difference between Definition \ref{defn:naive property A} and property A is that the family of sets $\{A_x\}_{x\in X}$ are required to be in $X$ rather than $X \times \NN$. While it is obvious that naive property A implies property A, it is not immediately clear how one might approach the converse.

The distinction between the definitions of property A and its naive form is one of weights and in this respect it is analogous to the relationship between $ULA$ (Uniform Local Amenability) and $ULA_\mu$ introduced in \cite{brodzki2013uniform}. The task of establishing the equivalence of property A and naive property A is thus spiritually similar to the result, proved by Elek \cite{elek2021uniform}, that $ULA$ and $ULA_\mu$ are equivalent.

\bigskip

In this paper we establish the equivalence of property A and naive property A in key cases including groups, box spaces and any coarsely connected space of bounded geometry.

To state our results in full generality, we begin with a definition. Recall that for a metric space $(X,d)$, the \emph{$r$-Rips complex of $X$} is the simplicial complex whose simplices are $\{x_0,x_1, \cdots,x_n\} \subseteq X$ with $d(x_i, x_j) \leq r$ for any $i,j=0,1,\cdots,n$.

\begin{defn}
\label{coarsely unbounded}
We say that a metric space \emph{coarsely has unbounded components} if for some $r$ every connected component of the $r$-Rips complex is unbounded.
\end{defn}

As usual, a discrete metric space $(X,d)$ is said to have \emph{bounded geometry} if the number $\sup_{x\in X} |B(x,R)|$ is finite for any $R>0$, where $B(x,R):=\{y\in X: d(y,x) \leq R\}$. 

Our main result is the following:

\begin{alphthm}\label{naive A=A}
Let $X$ be a discrete bounded geometry metric space which coarsely has unbounded components. Then $X$ has property A \emph{if and only if} $X$ has naive property A.
\end{alphthm}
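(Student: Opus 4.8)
The easy implication is immediate: a finite subset $A_x\subseteq X$ is also a finite subset of $X\times\NN$ via $z\mapsto (z,1)$, and under this identification the two Følner conditions coincide, so naive property A implies property A. I therefore concentrate on the converse.

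The first step is to pass to the standard $\ell^1$-reformulation of property A: for every $R,\varepsilon>0$ there is a family of functions $\xi_x\in \ell^1(X)$ with $\xi_x\ge 0$, $\norm{\xi_x}_1=1$ and $\supp\xi_x\subseteq B(x,S)$, such that $\norm{\xi_x-\xi_y}_1<\varepsilon'$ whenever $d(x,y)\le R$, where $\varepsilon'\to 0$ as $\varepsilon\to 0$. Since $X$ has bounded geometry each support is uniformly finite, so after clearing a common denominator I may assume $\xi_x=\tfrac1N m_x$ for an integer-valued $m_x\colon X\to\ZZ_{\ge 0}$ of total mass $N$. Thus $\xi_x$ is literally a weighted subset of $X$, in which a point $z$ is counted $m_x(z)$ times, and the task is to replace these multiplicities by genuinely distinct points of $X$ lying within bounded distance, coherently in $x$.

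The plan for this replacement is a transport (flow) argument of the kind used in uniformly finite homology. For each $x$ I want an injection $\Phi_x$ of the multiset described by $m_x$ into $X$, moving every copy a bounded distance, so that $A_x:=\operatorname{im}\Phi_x$ is an honest subset of $X$ with $|A_x|=N$ contained in a ball $B(x,S')$ for some $S'$ independent of $x$. This is exactly the problem of routing $N$ units of mass, distributed according to $m_x$ and capped at one unit per site, out to distinct destinations; the hypothesis that $X$ coarsely has unbounded components is what guarantees such a flow exists, since each component, being unbounded and of bounded geometry, is infinite and can absorb the excess mass without a Følner-type obstruction, while bounded geometry bounds the displacement uniformly. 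A Hall/marriage condition verified on the finite neighbourhoods of $\supp m_x$, using the unbounded growth of balls inside a component, provides the required matching at a uniform radius.

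The main obstacle, and the real content, is coherence: the injections $\Phi_x$ and $\Phi_y$ for neighbouring points must be chosen so that $|A_x\triangle A_y|$ is controlled by $\norm{m_x-m_y}_1=N\norm{\xi_x-\xi_y}_1$ while $|A_x\cap A_y|$ stays of order $N$; only then does the Følner ratio $\tfrac{|A_x\triangle A_y|}{|A_x\cap A_y|}$ inherit the bound $\lesssim\varepsilon'$. I expect to achieve this by making the transport canonical, routing mass along a fixed flow determined by the component structure rather than by the individual basepoint, so that the symmetric difference between the sets produced at $x$ and at $y$ is supported on the transported image of the difference $m_x-m_y$ and hence has size $O(\norm{m_x-m_y}_1)$. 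Reconciling the uniform displacement radius with a genuinely bounded propagation $S'$ for the resulting sets, simultaneously over all basepoints, is the delicate point where both standing hypotheses, coarsely unbounded components and bounded geometry, are indispensable.
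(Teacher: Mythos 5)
Your proposal correctly identifies the overall strategy — convert the property A sets into integer-valued functions of uniformly bounded mass, then transport the excess multiplicity along a flow that is \emph{fixed in advance} rather than chosen per basepoint — and this is indeed the architecture of the paper's proof. But as written it is a plan, not a proof: the two steps that constitute the actual mathematical content are left as declared intentions. First, you never construct the canonical flow. The paper does this by fixing $r$ so that every component of the $r$-Rips complex is unbounded, taking a maximal tree in each component, using bounded geometry to extract an embedded ray and hence a point at infinity, and orienting all tree edges towards it; this produces a map $\sigma:X\to X$ with one outgoing edge per vertex and the crucial \emph{properness} property that $x,\sigma(x),\sigma^2(x),\dots$ eventually leaves every bounded set. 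Your Hall/marriage alternative, even if it produces a valid injection $\Phi_x$ for each single $x$ (which it does, with radius depending on the mass bound), is constructed separately for each basepoint and therefore cannot give coherence — a point you concede but do not resolve.

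Second, and more seriously, coherence does not follow merely from the transport being ``canonical,'' and your assertion that the symmetric difference of the outputs is supported on the transported image of $m_x-m_y$ is exactly the statement that needs proof. Even with a fixed flow $\sigma$, the natural procedures (push units of mass along $\sigma$ until they find an empty site) make global decisions: where a given unit of $m_x$ lands depends on all the other mass present, so $\Phi_x$ and $\Phi_y$ can disagree even at points where $m_x$ and $m_y$ coincide, and nothing in your outline bounds $|A_x\triangle A_y|$ by $\norm{m_x-m_y}_1$. The paper's resolution is a specific operator: writing $a=b(a)+t(a)$ with $b(a)$ the characteristic function of $\supp a$ (the ``base'') and $t(a)$ the ``towers,'' define $s_1(a)(x)=b(a)(x)+\sum_{\sigma(y)=x}t(a)(y)$, i.e.\ leave one unit at each occupied site and push only the excess one step along $\sigma$. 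The whole point is that $s_1$ is \emph{monotone}: $a\leq a'$ implies $s_1(a)\leq s_1(a')$. Iterating, properness of $\sigma$ plus bounded geometry force $s_n(a)$ to become $\{0,1\}$-valued after at most $\norm{a}_1\norm{t(a)}_1$ steps (uniform for fixed $R,\varepsilon$, which also bounds the displacement by a multiple of $r$), and monotonicity of the limit $s_\infty$ gives $s_\infty(a_x\land a_y)\leq s_\infty(a_x)\land s_\infty(a_y)$, from which both required estimates $\norm{s_\infty(a_x)-s_\infty(a_y)}_1\leq\norm{a_x-a_y}_1$ and $\norm{s_\infty(a_x)\land s_\infty(a_y)}_1\geq\norm{a_x\land a_y}_1$ follow by a short $\ell^1$ computation. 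Without constructing the flow, the shift operator, its termination bound, and its monotonicity, the ``delicate point'' you flag at the end is the entire theorem, and your proposal leaves it open.
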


The proof of Theorem A uses a new technique to construct flows in the space to redistribute mass out towards infinity, allowing us to do this whenever the space has coarsely unbounded components, without requiring any further constraints on the geometry.

\bigskip

Given that naive property A agrees with property A in such a wide range of cases it is natural to ask if naive property A is a coarse invariant. Just as with the case of subspaces, the argument for coarse invariance of property A relies on the multiplicities allowed by the definition.  However in the case of coarse equivalence for bounded geometry metric spaces we use uniform bounds on the multiplicities to demonstrate that this does preserve naive property A.

\begin{alphthm}\label{thm:coarse invariance}
If $X$ is a bounded geometry metric space with naive property A, then any bounded geometry metric space coarsely equivalent to $X$ also has naive property A.
\end{alphthm}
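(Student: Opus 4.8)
The plan is to show that naive property A is preserved under coarse equivalence between bounded geometry spaces, and the key point is that while a coarse equivalence need not be injective, the bounded geometry hypothesis provides uniform bounds on how many points can collapse together, so we can control the multiplicities that would otherwise force us into $X \times \NN$. Let $f \colon X \to Y$ be a coarse equivalence with coarse inverse $g \colon Y \to X$, both of which are controlled maps in the sense that $d(f(x),f(x')) \le \rho_+(d(x,x'))$ and similarly for $g$, and satisfying $d(g(f(x)),x) \le C$ and $d(f(g(y)),y) \le C$ for some constant $C$. By bounded geometry, the preimages $f^{-1}(y)$ and $g^{-1}(x)$ all have cardinality bounded by some uniform constant $N$, since a fibre of $f$ lands in a single point and so has diameter zero in $Y$, hence bounded diameter in $X$, hence bounded cardinality.

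First I would fix $R, \varepsilon > 0$ for which we wish to produce naive F\o{}lner sets in $Y$, and translate these parameters back across $f$. Given $y, y' \in Y$ with $d(y,y') \le R$, I would choose points $x = g(y)$, $x' = g(y')$ in $X$; these satisfy $d(x,x') \le \rho_-(R)$ for some controlled $\rho_-$. So it suffices to start from a naive property A family $\{A_x\}_{x \in X}$ in $X$ with respect to the control radius $\rho_-(R)$ and a suitably small $\varepsilon'$ to be chosen at the end. The natural definition is to transport the set $A_x$ through $f$, setting $B_y := f(A_{g(y)})$ (as a subset of $Y$), or rather a version that accounts for multiplicity. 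The subtlety is that $f$ may be non-injective on $A_x$, so $|f(A_x)|$ can be strictly smaller than $|A_x|$; however, because the fibres of $f$ have size at most $N$, we lose at most a factor of $N$ in cardinality, and this is precisely where the uniform multiplicity bound rescues us: the ratios $\frac{|f(A_x) \triangle f(A_{x'})|}{|f(A_x) \cap f(A_{x'})|}$ can be controlled in terms of the original ratios, up to a factor depending only on $N$.

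The key estimate is the comparison between symmetric-difference-to-intersection ratios before and after applying $f$. I would prove a lemma of the form: if $\phi \colon X \to Y$ has all fibres of cardinality at most $N$, then for finite sets $A, A' \subseteq X$ one has a bound like $\frac{|\phi(A) \triangle \phi(A')|}{|\phi(A) \cap \phi(A')|} \le C_N \cdot \frac{|A \triangle A'|}{|A \cap A'|}$ whenever the latter ratio is small, where $C_N$ depends only on $N$. The point is that $\phi(A) \cap \phi(A') \supseteq \phi(A \cap A')$, and the latter has cardinality at least $\frac{1}{N}|A \cap A'|$, while $\phi(A) \triangle \phi(A')$ is contained in $\phi(A \triangle A')$ and so has cardinality at most $|A \triangle A'|$. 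Combining these two estimates yields the desired inequality with $C_N = N$. Choosing $\varepsilon' = \varepsilon / N$ at the start then gives F\o{}lner ratios below $\varepsilon$ for the transported family in $Y$.

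The main obstacle, and the step requiring the most care, is the bookkeeping between the two spaces via $f$ and $g$: one must confirm that the support condition (condition (2) in Definition \ref{defn:naive property A}) survives the transport, which follows because $f$ is controlled and the original sets $A_x$ have bounded support, so $B_y$ has support bounded by $\rho_+(S) + (\text{error from } d(f(g(y)),y) \le C)$; and one must handle the mismatch between indexing by $y \in Y$ and by $x = g(y) \in X$, verifying that nearby $y$'s map to nearby $x$'s so the controlled-geometry estimates apply uniformly. A secondary point to verify is non-emptiness of $B_y$, which is immediate since $f(A_{g(y)})$ is the image of a non-empty set. I expect the cardinality comparison lemma to be the technical heart, and the rest to be a careful but routine chase through the coarse-equivalence constants, all of which are finite and uniform precisely because of the bounded geometry assumption.
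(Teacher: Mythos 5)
Your proposal is correct, and it reaches the theorem by a more direct route than the paper. You push the F\o{}lner sets forward through the coarse equivalence $f$ itself, setting $B_y = f(A_{g(y)})$, and control the damage using the fact that bounded geometry of $X$ forces all fibres of $f$ to have cardinality at most some uniform $N$ (fibres have bounded diameter since $f$ is a coarse embedding). Your key lemma --- that an at-most-$N$-to-one map $\phi$ satisfies $|\phi(A)\triangle\phi(A')| \le |A\triangle A'|$ (via $\phi(A)\setminus\phi(A')\subseteq \phi(A\setminus A')$) and $|\phi(A)\cap\phi(A')| \ge \frac{1}{N}|A\cap A'|$ (via $\phi(A\cap A')\subseteq \phi(A)\cap\phi(A')$), hence distorts the ratio by at most a factor of $N$ --- is exactly right, and in fact holds unconditionally, not just ``whenever the ratio is small.'' The paper instead factors the problem: it first observes that \emph{injective} coarse equivalences push naive property A forward, then chooses a section $g$ of $f:X\to f(X)$, sets $Z=g(f(X))$, injects $X$ into $Z\times\{0,\dots,M-1\}$ using bounded geometry, pushes naive A forward through that injection, and finally collapses $Z\times\{0,\dots,M-1\}$ down to $Z$ --- and this last projection step uses precisely your lemma, with the $M$-to-one projection in place of $f$. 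So the essential estimate is shared, but the architectures differ: your single-step transport is shorter and makes clear that bounded geometry is only needed on the source space $X$ (for the fibre bound), while the paper's factorization isolates the collapse $Z\times\{0,\dots,M-1\}\to Z$ as the sole obstruction, which illuminates the connection to Theorem~\ref{thm:subspaces}: the difficulty in coarse invariance is exactly a (solvable) instance of the problem of pulling naive property A back to a coarsely dense subspace.
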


We do not know whether naive property A is preserved when passing to subspaces. Indeed we note the following:

\begin{alphthm}\label{thm:subspaces}
Naive property A is equivalent to property A for the class of discrete metric spaces of bounded geometry \emph{if and only if} naive property A is preserved when passing to subspaces.
\end{alphthm}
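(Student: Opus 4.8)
The plan is to establish the two implications separately. Write $(P)$ for the assertion that naive property A and property A coincide across bounded geometry discrete metric spaces, and $(Q)$ for the assertion that naive property A passes to subspaces.

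The implication $(P)\Rightarrow(Q)$ is the routine one. Assuming $(P)$, let $X$ be a bounded geometry space with naive property A and let $Y\subseteq X$ be a subspace. Naive property A trivially implies property A, so $X$ has property A; property A is inherited by subspaces \cite{Wil09b,Yu00}, so $Y$ has property A. Since balls in $Y$ are contained in the corresponding balls of $X$, the subspace $Y$ again has bounded geometry, and $(P)$ then forces $Y$ to have naive property A.

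For the converse $(Q)\Rightarrow(P)$, which carries the real content, recall that naive property A always implies property A, so it suffices to prove that property A implies naive property A for bounded geometry spaces, under the hypothesis $(Q)$. Given such an $X$ with property A, I would enlarge it to the auxiliary space $X':=X\times\NN$ equipped with the $\ell^1$ metric $d\big((x,m),(y,n)\big)=d_X(x,y)+|m-n|$. Then $X'$ has bounded geometry (being a product of bounded geometry spaces), it has property A (products of property A spaces retain property A, and $\NN$ has property A), and it coarsely has unbounded components: for any $r\geq 1$ each ray $\{x\}\times\NN$ is connected in the $r$-Rips complex, so every component of that complex is unbounded. Thus Theorem \ref{naive A=A} applies and $X'$ has naive property A. Finally $X$ is isometric to the subspace $X\times\{0\}\subseteq X'$, so hypothesis $(Q)$ delivers naive property A for $X$, completing the equivalence.

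The main obstacle is precisely the construction at the heart of $(Q)\Rightarrow(P)$: one must embed an arbitrary bounded geometry property A space into a space that still has property A and bounded geometry but, crucially, now \emph{coarsely has unbounded components}, so that Theorem \ref{naive A=A} becomes available. The product with $\NN$ is the natural device, and the only verifications needing care are the permanence of property A under products and the claim that the adjoined $\NN$-direction renders every Rips component unbounded; both are straightforward but are what make the argument go through.
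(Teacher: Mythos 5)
Your proposal is correct and follows essentially the same route as the paper: the paper proves this theorem precisely by observing that $X\times\NN$ is a discrete bounded geometry property A space which coarsely has unbounded components and contains $X$ as a subspace, so Theorem \ref{naive A=A} plus the subspace hypothesis yields naive property A for $X$, while the easy direction follows from heredity of property A and of bounded geometry. Your verifications (bounded geometry and property A of the product, unboundedness of Rips components via the rays $\{x\}\times\NN$) are exactly the details the paper leaves implicit.
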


This follows from the observation that if $X$ is a discrete bounded geometry property A metric space, then $X\times \NN$ is a discrete bounded geometry property A metric space which coarsely has unbounded components, and which contains $X$ as a subspace.

\bigskip

While the class of spaces covered by Theorem \ref{naive A=A} is extremely broad, there are a number of natural examples which are not covered by this class. In particular for box spaces of residually finite groups, the components of any Rips complex are all bounded. The same is also true for infinite direct sums of finite groups when equipped with their natural coarse geometry arising from any left invariant proper metric. Nonetheless we also obtain the following results:

\begin{alphthm}\label{thm:group case}
A countable discrete group with proper left-invariant metric has property A \emph{if and only if} it has naive property A.
\end{alphthm}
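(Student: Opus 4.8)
The forward implication, that naive property A implies property A, is immediate from the definitions, so I would concentrate on the converse. The key preliminary observation is that almost all of the hypotheses of Theorem \ref{naive A=A} come for free. If $d$ is a proper left-invariant metric on the countable group $G$, then left translation is an isometry, so $B(x,R)=xB(e,R)$ and hence $|B(x,R)|=|B(e,R)|$, a finite number independent of $x$. Thus $G$ is uniformly discrete and of bounded geometry automatically, and the \emph{only} condition of Theorem \ref{naive A=A} that can fail is that $G$ coarsely has unbounded components. The plan is therefore to reduce to Theorem \ref{naive A=A} whenever this holds, and to dispatch the remaining groups directly.

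The next step is to pin down exactly when $G$ coarsely has unbounded components. Writing $H_r:=\langle B(e,r)\rangle$ for the subgroup generated by the $r$-ball (which is symmetric, since $s\in B(e,r)\Leftrightarrow s^{-1}\in B(e,r)$), a short path-chasing argument identifies the connected component of $x$ in the $r$-Rips complex with the left coset $xH_r$: a path $x=u_0,u_1,\dots,u_k$ with $u_i^{-1}u_{i+1}\in B(e,r)$ is exactly a word $x s_1\cdots s_k$ with $s_j\in B(e,r)$. As left translation is an isometry, every component is isometric to $H_r$, so all components are unbounded precisely when $H_r$ is infinite. Since $B(e,r)$ is finite and exhausts $G$ as $r\to\infty$, it follows that some $H_r$ is infinite if and only if $G$ is \emph{not} locally finite. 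Hence $G$ coarsely has unbounded components if and only if $G$ is not locally finite.

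This yields a clean dichotomy. If $G$ is not locally finite, then by the above it is a bounded geometry discrete metric space which coarsely has unbounded components, so Theorem \ref{naive A=A} applies verbatim and property A gives naive property A. If instead $G$ is locally finite, then $G=\bigcup_n G_n$ is a directed union of finite subgroups; as finite groups are amenable and amenability passes to directed unions, $G$ is amenable. An amenable group has naive property A directly: for $R,\varepsilon>0$ choose a finite $F\subseteq G$ with $|sF\triangle F|<\delta|F|$ for all $s\in B(e,R)$, and set $A_x:=xF$. These are finite non-empty subsets of $G$ with no weights, and since $|A_x\triangle A_y|=|F\triangle (x^{-1}y)F|$ while $d(x,y)\le R$ forces $x^{-1}y\in B(e,R)$, the Følner estimate delivers conditions (1) and (2) of Definition \ref{defn:naive property A} for $\delta$ small enough. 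Notably the property A hypothesis is not even needed in this second case.

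The genuine analytic content of the theorem thus lives entirely inside Theorem \ref{naive A=A}; the new ingredients here are purely structural, namely the coset description of the Rips components and the recognition that the groups excluded from Theorem \ref{naive A=A} are exactly the locally finite ones, which are automatically amenable and hence already weight-free. I do not expect a serious obstacle: the only points needing care are verifying that the dichotomy is exhaustive and checking that the equivariant sets $xF$ are honest subsets of $G$ rather than of $G\times\NN$, both of which are routine once the isometric coset structure is in place.
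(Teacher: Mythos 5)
Your proof is correct, but it takes a genuinely different route from the paper. The paper splits along the dichotomy \emph{amenable versus non-amenable}: in the amenable case it translates F\o lner sets (as you do), and in the non-amenable case it invokes the self-contained argument of Section \ref{sec:unif finite homology}, where non-amenability gives vanishing of $H_0^{uf}$ via Block--Weinberger, hence a uniform cover by tails along which the $\NN$-direction of the property A sets is transported. You instead split along \emph{locally finite versus non-locally finite}: your coset computation identifying the component of $x$ in the $r$-Rips complex with $x\langle B(e,r)\rangle$ shows that a group coarsely has unbounded components exactly when it is not locally finite, so the non-locally-finite case reduces verbatim to Theorem \ref{naive A=A}, while locally finite groups are amenable and handled by F\o lner translation. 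Both dichotomies are exhaustive and both arguments are sound (note there is no circularity, since Theorem \ref{naive A=A} is proved independently of the group case). What the trade-off buys: the paper's route through non-amenability uses only the lighter tails construction, with a flow bound depending just on the tail multiplicity $K$, and avoids the full tree-flow machinery of Theorem \ref{naive A=A}; your route avoids uniformly finite homology and Block--Weinberger entirely, exhibits Theorem \ref{thm:group case} as a formal corollary of Theorem \ref{naive A=A} plus elementary group theory, and isolates precisely which groups (the locally finite ones) fall outside the scope of the main theorem --- a point the paper only touches on implicitly via the example $\bigoplus_{n\in\NN}\ZZ/2$ in the final section.
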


\begin{alphthm}\label{thm:box space case}
Let $G$ be a residually finite group, and $G=N_0 \geq N_1 \geq N_2 \geq \cdots $ be a sequence of normal subgroups with finite index and $\bigcap_{k=0}^\infty N_k = \{1\}$. Let $X$ be the associated box space. Then $X$ has property A \emph{if and only if} it has naive property A.
\end{alphthm}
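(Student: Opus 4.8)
The plan is to route the whole statement through the amenability of $G$, exploiting the fact that the components $G/N_k$ are finite \emph{groups} whose diameters tend to infinity. Since naive property A trivially implies property A, I only need the forward direction, and I would prove it by establishing the chain
\[
X \text{ has property A} \ \Longrightarrow\ G \text{ is amenable} \ \Longrightarrow\ X \text{ has naive property A}.
\]

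For the first implication, fix $R$ and $\varepsilon$ and take a property A family $\{A_x\}$ of support radius $S$; collapsing the $\NN$-coordinate and normalising turns it into probability measures $f_x$ on $X$, each supported in $B(x,S)$, with $\|f_x-f_y\|_1$ controlled by $\varepsilon$ whenever $d(x,y)\le R$. On a single component $Q=G/N_k$, whose metric is left-invariant, I would re-centre and average: set $\nu=\frac1{|Q|}\sum_{x\in Q} x^{-1}\cdot f_x$, where $x^{-1}\cdot$ denotes left translation. Each summand is supported in $B(e,S)$, so $\nu$ is a probability measure supported there, and reindexing the sum gives $\|g\cdot\nu-\nu\|_1\le\frac1{|Q|}\sum_y\|f_{yg}-f_y\|_1$, which is small for every $g$ with $d(g,e)\le R$, since left-invariance yields $d(yg,y)=d(g,e)\le R$. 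Because $\bigcap_k N_k=\{1\}$, for $k$ large the quotient map $\pi_k$ is injective on balls of radius $S+R$; lifting $\nu$ through this local section produces a probability measure on $G$ supported in $B(e,S)$ that is approximately $(R,\varepsilon)$-invariant. Letting $R\to\infty$ and $\varepsilon\to0$ exhibits F\o lner sets for $G$, so $G$ is amenable.

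For the second implication I would push genuine F\o lner sets of $G$ back down to every component. Given $R,\varepsilon$, amenability supplies a finite $F\subseteq G$ with $|F\triangle gF|/|F|<\varepsilon$ for all $g\in B(e,R)$; put $S=\max_{a\in F}d(e,a)$, and choose $k_0$ so that $\pi_k$ is injective on balls of radius $S+R$ for $k\ge k_0$. For $x$ in such a large component, set $A_x=\pi_k(\tilde x F)$ with $\tilde x$ a lift chosen from a local section; injectivity makes $A_x$ a genuine subset of $G/N_k$ of support radius $S$, and choosing compatible lifts $\tilde y=\tilde x g$ with $g$ a minimal representative of $x^{-1}y$ gives $|A_x\triangle A_y|=|F\triangle gF|$, transferring the F\o lner estimate verbatim. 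The finitely many components with $k<k_0$ make up a bounded region of $X$, on which one may simply take $A_x$ to be the whole region; absorbing its diameter into $S$ yields a weight-free family for the given $R,\varepsilon$, and hence naive property A.

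The delicate point, and the conceptual heart of the argument, is the first implication: it is precisely the demand that the supports stay uniformly bounded while the component diameters grow without bound that forces $G$ to be amenable, and the re-centring average above is what converts the inherently non-equivariant property A data into the equivariant near-invariant measure needed to detect amenability. Once amenability is in hand the removal of weights is essentially automatic, the only routine care being the consistent choice of lifts within the injectivity radius and the observation that, as inter-component distances tend to infinity, all but finitely many components are mutually far enough apart that each $A_x$ remains inside its own component.
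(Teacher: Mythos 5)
Your proposal is correct and follows essentially the same route as the paper: both arguments reduce the statement to the amenability of $G$ and then obtain naive property A by pushing left-translated F\o lner sets down through the quotient maps $\pi_k$ on the components of large index (where $\pi_k$ is injective/isometric on the relevant balls), lumping the finitely many small components into a single bounded set. The only difference is that the paper simply cites the equivalence ``$X$ has property A $\Leftrightarrow$ $G$ is amenable'' as well known, whereas you supply the standard proof of the forward implication via the re-centring average $\nu=\frac{1}{|Q|}\sum_{x\in Q}x^{-1}\cdot f_x$ and a lift through the local section.
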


We would like to thank the referee for their helpful comments.

\section{Coarse invariance}

In this section we will prove Theorem \ref{thm:coarse invariance}.

It is clear that naive property A pushes forward under injective coarse equivalences. More precisely, if $f:X\to Y$ is such a map then given sets $A_x$ in $X$ demonstrating naive property A we can define sets $B_y\subseteq Y$ given by $f(A_x)$ where $f(x)$ is a nearest neighbour of $y$ in the image $f(X)$.

For a general coarse equivalence $f$ of bounded geometry metric spaces, pick a section $g$ of the map $f:X\to f(X)$ and set $Z=g(f(X))$.  Then $f$ yields an injective coarse equivalence from $Z$ to $Y$.  Hence it suffices to show that the inclusion of $Z$ into $X$ allows one to \emph{pull back} naive property $A$.

Let $S$ be an upper bound for the distance between a point $x$ and its image $g(f(x))$ in $Z$ (which is finite since $f$ is a coarse equivalence) and using bounded geometry let $M$ be an upper bound for the cardinality of $S$-balls in $X$. We may thus choose an injection $\iota$ from $X$ into $Z\times \{0,\dots,M-1\}$ such that $\iota(x)\in B_S(gf(x))$.

By construction, $\iota$ gives an injective coarse equivalence (where $Z\times \{0,\dots,M-1\}$ is given any sensible metric). Hence naive property $A$ for $X$ pushes forward to $Z\times \{0,\dots,M-1\}$. Therefore, the proof of Theorem \ref{thm:coarse invariance} reduces to the question of whether naive property $A$ for $Z\times \{0,\dots,M-1\}$ implies naive property $A$ for $Z$.

Thus suppose that $Z\times \{0,\dots,M-1\}$ has naive property A. Fix $R,\epsilon > 0$ and let $\epsilon'=\epsilon/M$. Let $A_{(z,j)}$ denote naive property A subsets satisfying
\[
\frac{|A_{(z,j)}\triangle A_{(z',j')}|}{|A_{(z,j)}\cap A_{(z',j')}|} < \epsilon'
\]
when the distance from $(z,j)$ to $(z',j')$ is at most $R$.

Now for $z\in Z$ set
\[
\widetilde{A}_z=\{w\in Z: A_{(z,0)}\cap (\{w\}\times \{0,\dots, M-1\})\text{ is non-empty}\}.
\]
First we note that $|\widetilde{A}_z \triangle \widetilde{A}_{z'}|\leq |A_{(z,0)}\triangle A_{(z',0)}|$, since $w\in \widetilde{A}_z \triangle \widetilde{A}_{z'}$ means that $A_{(z,0)}\cap (\{w\}\times \{0,\dots, M-1\})$ is non-empty while $A_{(z',0)}\cap (\{w\}\times \{0,\dots, M-1\})$ is empty or vice versa.

On the other hand, we claim that $M \cdot |\widetilde{A}_z \cap \widetilde{A}_{z'}|\geq |A_{(z,0)}\cap A_{(z',0)}|$. This follows as $(w,k)\in A_{(z,0)}\cap A_{(z',0)}$ implies $w\in \widetilde{A}_z \cap \widetilde{A}_{z'}$ giving a map from the former set to the latter.  This map is at most $M$ to $1$ giving the required inequality.

It now follows that
\[
\frac{|\widetilde{A}_z \triangle \widetilde{A}_{z'}|}{|\widetilde{A}_z \cap \widetilde{A}_{z'}|}<M\epsilon'=\epsilon
\]
as required which completes the proof of Theorem \ref{thm:coarse invariance}.

\section{Uniformly finite chains and non-amenable spaces}\label{sec:unif finite homology}

The notion of amenability for metric spaces was introduced by Block and Weinberger in \cite{BW92}, generalising the notion for groups. Recall that a discrete metric space $(X,d)$ is \emph{amenable} if for any $R,\varepsilon>0$, there exists a finite subset $U \subseteq X$ such that $|\partial_R U| \leq \varepsilon |U|$ where $\partial_R U:=\{x\in X \setminus U: d(x,U) \leq R\}$.

In this section, as a motivation for the general result, we prove a special case of Theorem \ref{naive A=A}: if $X$ is a discrete bounded geometry space which is non-amenable then $X$ has property A if and only if $X$ has naive property A. This will be sufficient to establish Theorem \ref{thm:group case}.

The fact that this is a special case of Theorem \ref{naive A=A} follows from the observation that if a discrete metric space $(X,d)$ does not coarsely have unbounded components, then it must be amenable.

We will use the tool of uniform finite homology established in \cite{BW92}. Let $(X,d)$ be a discrete metric space. For each $i\in \NN \cup\{0\}$, denote $X^{i+1}$ the $(i+1)$-Cartesian product of $X$ equipped with the metric:
\[
d((x_0,\cdots,x_i),(y_0,\cdots,y_i)) := \max_{0\leq j\leq i} d(x_j,y_j).
\]
Here we take the liberty of using the same notation $d$ for the metric on $X^{i+1}$ which should not cause any confusion.

Denote by $C_i^{uf}(X)$ the abelian group of \emph{infinite} formal sums
\[
c=\sum a_{\bar x} \bar x
\]
where $\bar x \in X^{i+1}$ and $a_{\bar x} \in \ZZ$, satisfying the following:
\begin{enumerate}
  \item there exists $L>0$ such that $|a_{\bar x}| \leq L$;
  \item for any $s>0$ there exists $N_s>0$ such that for any $\bar y \in X^{i+1}$, we have
          \[
          \big|\{\bar x \in B(\bar y,s) : a_{\bar x} \neq 0 \}\big| \leq N_s;
          \]
   \item there exists $r>0$ such that $a_{\bar x} =0$ if $\bar x=(x_0,\cdots,x_i)$ and $\diam\{x_0,\ldots,x_i\} > r$.
\end{enumerate}

Note that condition (2) holds trivially if $(X,d)$ has bounded geometry. 

For each $i\in \NN$, define the boundary map $\partial: C_i^{uf}(X) \rightarrow C_{i-1}^{uf}(X)$ by
\[
\partial (x_0,\cdots,x_i)=\sum_{j=0}^i (-1)^j (x_0, \cdots, \hat{x}_j, \cdots, x_i)
\]
and extend `by linearity' to the infinite sums. The pair $(C_{\bullet}^{uf}(X),\partial)$ defines a chain complex and its homology group, the \emph{uniformly finite homology} of $X$, is denoted by $H_i^{uf}(X)$.

Block and Weinberger showed that the non-amenability of $X$ is equivalent to vanishing of $H_0^{uf}(X)$. Moreover this vanishing is characterised as follows:

\begin{lem}[{\cite[Lemma 2.4]{BW92}}]
Let $(X,d)$ be a discrete metric space of bounded geometry with $H_0^{uf}(X)=0$. Then for every $x\in X$ there exists $t_x\in C_1^{uf}(X)$ such that $\partial t_x = x$ and $\sum_x t_x \in C_1^{uf}(X)$.
\end{lem}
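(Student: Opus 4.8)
The plan is to use the vanishing of $H_0^{uf}(X)$ to produce a single global primitive and then to split it into the pieces $t_x$. First observe that the fundamental chain $[X]:=\sum_{x\in X}x$ lies in $C_0^{uf}(X)$: its coefficients are all equal to $1$, condition (2) holds automatically by bounded geometry, and the diameter condition (3) is vacuous for $0$-chains. As a $0$-chain it is automatically a cycle, so $H_0^{uf}(X)=0$ yields $T\in C_1^{uf}(X)$ with $\partial T=[X]$. (The same applied to a single point shows each $x$ bounds some chain; the real content of the lemma is to choose the $t_x$ so that they \emph{simultaneously} form a uniformly finite chain, and this is exactly what decomposing $T$ will achieve.)

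Next I would read $T$ as an integer flow on the $r$-Rips graph. Replacing each oriented edge of negative weight by the same edge reversed changes neither the boundary nor membership in $C_1^{uf}(X)$, so I may assume $T=\sum_e f(e)\,e$ with $f\ge0$, bounded and locally finite, supported on edges $(a,b)$ with $d(a,b)\le r$. The equation $\partial T=[X]$ then says precisely that $I(v)-O(v)=1$ at every vertex $v$, where $I(v)$ and $O(v)$ denote the total weight of edges entering and leaving $v$; both are finite sums because the Rips graph has bounded degree and $f$ is bounded.

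The heart of the argument is a flow decomposition, carried out on a cycle-free modification of $f$. Assume for the moment a flow $f'$ with $0\le f'\le f$, the same divergence (so still $I(v)=O(v)+1$ at every vertex), and \emph{acyclic} support. The relation $I(v)=O(v)+1$ lets me match, at each $v$, the incoming units of $f'$ bijectively to the outgoing units together with a single extra ``termination'' slot; since every outgoing unit then has a unique incoming predecessor, tracing a unit backwards along these local matchings never stops. This yields for each $v$ an infinite backward walk $t_v$ with forward endpoint $v$, every unit of $f'$ lying on exactly one walk, so $\sum_v t_v=f'\in C_1^{uf}(X)$. Acyclicity forbids a walk from revisiting a vertex, so each $t_v$ is a \emph{simple} path which, by bounded geometry, leaves every ball and runs out to infinity. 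Hence $t_v$ has all coefficients in $\{0,1\}$ and propagation $\le r$, so $t_v\in C_1^{uf}(X)$ (condition (2) being automatic under bounded geometry), while the telescoping identity $\partial\sum_{i\ge0}(w_{i+1},w_i)=w_0$ along the path $v=w_0,w_1,\dots\to\infty$ gives $\partial t_v=v$.

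The one genuinely delicate step --- and the one I expect to be the main obstacle --- is producing the acyclic flow $f'$. On a finite graph this is the standard move of cancelling directed cycles, but here it must be done on an infinite graph with uniform control preserved. I would obtain it by Zorn's lemma applied to the poset of integer flows $g$ with $0\le g\le f$ and the same divergence as $f$, ordered edgewise. Bounded degree makes the divergence at each vertex depend on only finitely many edge-weights, so the pointwise infimum of a totally ordered chain again satisfies the divergence constraint and provides a lower bound in the poset; a minimal element must then have acyclic support, for otherwise subtracting a directed cycle would yield a strictly smaller flow with the same divergence, contradicting minimality. Since $0\le f'\le f$, uniform finiteness of $f'$ is inherited from that of $f$.
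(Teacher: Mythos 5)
The paper does not actually prove this lemma: it is quoted from Block--Weinberger \cite[Lemma 2.4]{BW92} and used as a black box, so there is no internal proof to compare against, and your argument has to stand on its own. On its own terms it is essentially correct. The chain of reductions --- bound the fundamental class $[X]$, rectify signs to get a non-negative integer flow $f$ of bounded weight and propagation with divergence $I(v)-O(v)=1$ at every vertex, then pass to a Zorn-minimal subflow $f'$ with the same divergence, which must be acyclic since subtracting a directed cycle would contradict minimality --- is sound. In particular the delicate point works exactly as you say: the divergence at $v$ involves only finitely many edges (bounded geometry plus propagation $\le r$), and on any finite set of edges the pointwise infimum of a totally ordered family of integer-valued subflows is attained by a single member of the family, so the infimum is again a flow with the same divergence. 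The local matchings then produce backward walks that are simple by acyclicity, proper by bounded geometry, and pairwise unit-disjoint because forward tracing is deterministic; this gives $t_x\in C_1^{uf}(X)$ with $\partial t_x=x$. Notably, your construction also recovers the refinement the paper actually extracts from \cite{BW92}, namely that each $t_x$ can be taken of the form $\sum_{j\ge 0}(t^x_{j+1},t^x_j)$ with $t^x_0=x$.

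One overstatement needs repair, though it is cosmetic. You claim every unit of $f'$ lies on \emph{exactly} one walk, hence $\sum_x t_x=f'$. This is not justified and can fail: the forward trace of a unit need never reach a termination slot. For instance, on a trivalent tree with every edge given weight one and oriented toward a fixed end (divergence $+1$ everywhere, acyclic), one may choose the matchings so that the units along a fixed geodesic ray are always passed on and never absorbed; those units then lie on no tail, and $\sum_x t_x\lneq f'$. Fortunately all you need is that each unit lies on \emph{at most} one walk, which your determinism argument does prove: distinct tails are unit-disjoint, so $\sum_x t_x\le f'$ edgewise, and membership of the sum in $C_1^{uf}(X)$ follows because the coefficient bound, the propagation bound and local finiteness are all inherited from $f$. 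Replacing ``exactly one'' by ``at most one'' and the equality by this inequality completes a correct, self-contained proof.
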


The proof of this lemma shows that each $1$-chain $t_x$ can be taken to have the form $\sum_{j=0}^\infty (t^x_{j+1}, t^x_j)$ where the initial point $t^x_0$ is $x$. Note that as this is a uniformly finite chain in particular the distances $d(t_j^x, t_{j+1}^x)$ are uniformly bounded. We introduce the following terminology to describe the properties that these sequences enjoy:

\begin{defn}\label{defn:tails}
Let $(X,d)$ be a discrete metric space of bounded geometry. A \emph{uniform cover of $X$ by tails} is a family of sequences $\big\{t^x_j: j\in \NN \cup \{0\}\big\}_{x\in X}$ in $X$ such that $t^x_0=x$ for each $x\in X$, and there exist constants $r>0$ and $K \in \NN$ satisfying:
\begin{enumerate}
  \item for each $x\in X$, $t^x_j$ are distinct where $j\in \NN \cup \{0\}$ and $d(t_j^x, t_{j+1}^x) \leq r$;
  \item for any $z\in X$, we have $\big| \{ x\in X: z=t^x_j \mbox{~for~some~}j \in \{0\} \cup \NN\} \big| \leq K$.
\end{enumerate}
\end{defn}

Now suppose that $X$ is a discrete bounded geometry space which has property A and is non-amenable. By non-amenability it admits a uniform cover by tails for some fixed $r,K$.

Fix $R,\varepsilon>0$, and let $\varepsilon'=\frac{\varepsilon}{K}$. It is a well-known fact (see \emph{e.g.,} \cite[Theorem 1.2.4``(3)$\Rightarrow$(1)'']{Wil09b}) that $X$ having property A implies that there exists an $M>0$ and a family $\{A_x\}_{x\in X}$ of finite, non-empty subsets of $X \times \{0,1,2,\ldots, M\}$ satisfying:
\begin{itemize}
  \item for any $x,y\in X$ with $d(x,y) \leq R$, we have
  $\frac{|A_x \triangle A_y|}{|A_x \cap A_y|} < \varepsilon'$;
  \item there exists an $S>0$ such that for any $x\in X$ and $(y,n)\in A_x$, then $d(x,y)\leq S$.
\end{itemize}
Note that the constant $M$ depends on $R,\varepsilon'$ but is independent of $x\in X$.

Now we would like to transport the ``$\NN$-direction'' of $A_x$ into $X$ along the tail $t_x$. More precisely, consider the following map:
\[
\T: X \times \{0,1,2,\ldots, M\} \rightarrow X, \quad (x,n) \mapsto t^x_{M-n}.
\]
Setting $\widetilde{A}_x:=\T(A_x) \subseteq X$ for each $x\in X$, we claim that the family $\{\widetilde{A}_x\}_{x\in X}$ provides the naive property A sets for parameters $R,\varepsilon$. 

First note that for any $x\in X$ and $(y,n)\in A_x$, we have 
\[
d(t^y_{M-n},x) \leq d(t^y_{M-n}, t^y_0) + d(t^y_0,x) \leq (M-n)r + d(y,x) \leq Mr+S.
\]
Hence we obtain that $\widetilde{A}_x = \T(A_x) \subseteq B(x,Mr+S)$.

On the other hand, due to the second condition for tails we obtain that for any finite $Z\subseteq X \times \{0,1,2,\ldots, M\}$ we have $|Z| \leq K \cdot |\T(Z)|$. In particular, we have 
\[
|\widetilde{A}_x \cap \widetilde{A}_y| = |\T(A_x) \cap \T(A_y)| \geq |\T(A_x \cap A_y)| \geq \frac{1}{K}\cdot |A_x \cap A_y|
\]
for each $x,y\in X$. Also note that
\[
\widetilde{A}_x \setminus \widetilde{A}_y = \T(A_x) \setminus \T(A_y) \subseteq \T(A_x \setminus A_y)
\]
for any $x,y\in X$, hence we have
\[
|\widetilde{A}_x \triangle \widetilde{A}_y| = |\widetilde{A}_x \setminus \widetilde{A}_y| + |\widetilde{A}_y \setminus \widetilde{A}_x| \leq |A_x \setminus A_y| + |A_y \setminus A_x| = |A_x \triangle A_y|.
\]
Combining them together, we obtain that for any $x,y\in X$ with $d(x,y)\leq R$:
\[
\frac{|\widetilde{A}_x \triangle \widetilde{A}_y|}{|\widetilde{A}_x \cap \widetilde{A}_y|} \leq K \cdot \frac{|A_x \triangle A_y|}{|A_x \cap A_y|} < K\varepsilon' = \varepsilon.
\]
Hence we prove the claim, and $X$ has naive property A.

\begin{proof}[Proof of Theorem \ref{thm:group case}]
In the case of a countable discrete group with proper left-invariant metric, there is a dichotomy: either the group is amenable, in which case it has naive property A, exhibited by translating the F\o lner sets, or it is non-amenable, in which case the above argument shows that naive property A is equivalent to property A.
\end{proof}

\section{Spaces which coarsely have unbounded components}

This whole section is devoted to proving Theorem \ref{naive A=A}. The idea is similar to the non-amenable case presented in Section \ref{sec:unif finite homology}, however we cannot simply use tails to smear the property A sets. Instead, we need to transport the ``$\NN$-direction'' of the property A sets along some maximal tree. 

Recall Definition \ref{coarsely unbounded}: a discrete bounded geometry space \emph{coarsely has unbounded components} if for some $r$ every component of the $r$-Rips complex is unbounded. Given such a space we will fix $r$, and take a maximal tree in each component of the $r$-Rips complex. Bounded geometry ensures that each tree contains an embedded ray which we use to choose a point at infinity (in each component). We now orient the edges of the tree to point towards the chosen basepoint.

This yields a uniformly finite 1-chain on the original space with the following properties:

\begin{enumerate}
\item For every $x$ there is exactly one edge leaving $x$, whose other end we denote $\sigma(x)$.

\item For every $x$ the sequence $x,\sigma(x),\sigma(\sigma(x)),\dots$ is proper, that is any bounded set contains only finitely many of these points.
\end{enumerate}

We think of the map $\sigma$ as giving a flow on the space (in the place of tails for the non-amenable case) and use this to smear out the families of functions arising from property A to obtain characteristic functions exhibiting naive property A.

For a function $a$ on $X$, we denote its $\ell^1$-norm by $\|a\|_1:=\sum_{x\in X}|a(x)|$ and its support by $\supp a:=\{x\in X:a(x) \neq 0\}$. We say that $a$ is positive if $a(x) \geq 0$ for any $x \in X$.

\begin{proof}[Proof of Theorem \ref{naive A=A}]
The implication from naive property A to property A is trivial.

Supposing that $X$ has property A, for each $R,\varepsilon$ we have a family of finite, non-empty subsets $\{A_x\}_{x\in X}$ of $X\times \NN$ as specified in the definition of property A. As in Section \ref{sec:unif finite homology}, we can find an $M>0$ such that the family $\{A_x\}_{x\in X}$ can be chosen to be subsets of $X \times \{0,1,2,\ldots, M\}$.

From these we define uniformly bounded positive integer $0$-chains on $X$ by $a_x(z)=|A_x\cap (\{z\}\times \NN)|$ for $z\in X$. The conditions on the sets $A_x$ translate as:
\begin{enumerate}
  \item for any $x,y\in X$ with $d(x,y) \leq R$ we have
  \[\dfrac{\norm{a_x-a_y}_1}{\norm{a_x \land a_y}_1} < \varepsilon\]
  where $a\land b$ denotes the pointwise minimum of two chains;
  \item there exists an $S>0$ such that for any $x\in X$, $\supp a_x$ is contained in $B_S(x)$;
  \item there is a uniform bound (given by $M$ and the cardinality bound on $S$-balls) on the norm $\norm{a_x}_1$ for $x\in X$.
\end{enumerate}
Conditions $(2),(3)$ are obvious.  For (1) we note that $\norm{a_x-a_y}_1\leq |A_x \triangle A_y|$ while 
  $\norm{a_x \land a_y}_1\geq |A_x\cap A_y|$ since
\begin{align*}
\|a_x\land a_y\|_1&=\sum_z \min\{|A_x\cap (\{z\}\times \NN)|,|A_y\cap (\{z\}\times \NN)|\}
\\
&\geq\sum_z |A_x\cap (\{z\}\times \NN)\cap A_y\cap (\{z\}\times \NN)|=|A_x\cap A_y|
\\
\norm{a_x-a_y}_1&=\sum_z ||A_x\cap  (\{z\}\times \NN)|- |A_y\cap  (\{z\}\times \NN)||
\\
&\leq\sum_z |(A_x\cap  (\{z\}\times \NN))\triangle (A_y\cap  (\{z\}\times \NN))|
\\
&=\sum_z |(A_x\triangle A_y)\cap  (\{z\}\times \NN)|=|A_x\triangle A_y|.
\end{align*}

We think of values of $a(x)$ which are greater than $1$ as giving `towers' over the point $x$. The idea is to use $\sigma$ to shift the towers of $a$ thus reducing the overall height.

Given a positive integer $0$-chain $a$ on $X$, let $b(a)$, the \emph{base} of $a$, denote the characteristic function of the support of $a$. Let $t(a)$, the \emph{towers} of $a$, denote $a-b(a)$. We define $s_1(a)$ by:
\[
s_1(a)(x)=b(a)(x)+\sum_{y\in X, \sigma(y)=x} t(a)(y), \quad \text{for} \quad x\in X.
\]
We remark that $s_1$ preserves the $\ell^1$ norm (by positivity of $a$). Moreover, we have:

\begin{claim}\label{claim1}
 $s_1$ is positive in the sense that $a\leq a'$ implies $s_1(a)\leq s_1(a')$ (for $a,a'$ positive). (Here ``$\leq$'' means pointwise.)
\end{claim}

To see this, we first observe that $a\leq a'$ implies containment of the supports so $b(a)\leq b(a')$.
If $b(a)(x)=0$ then $t(a)(x)$ is also zero so $t(a)(x)\leq t(a')(x)$, while if $b(a)(x)=1$ we also have $b(a')(x)=1$ so $t(a)(x)\leq t(a')(x)$ follows from $a(x)\leq a'(x)$.
Since $b(a)\leq b(a')$ and $t(a)\leq t(a')$, it follows that $s_1(a)\leq s_1(a')$ which establishes the claim.

\bigskip

We define $s_n$ to be the $n$-fold composition of $s_1$, noting that $s_n$ is also positive.

\begin{claim}\label{claim2}
For $n$ sufficiently large, $s_n(a)$ is $0,1$-valued. The least such $n$ can be bounded depending only on the $\ell^1$-norm of $a$.
\end{claim}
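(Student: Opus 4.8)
The plan is to track two coupled monovariants: the $\ell^1$-norm, which $s_1$ preserves, and the size of the support, which can only grow. Write $N=\norm{a}_1$ and recall $t(a)=a-b(a)$ is the tower part. Since $b(a)$ is the characteristic function of $\supp a$, every point of $\supp a$ keeps value at least the surviving base $1$ after applying $s_1$, so $\supp a \subseteq \supp s_1(a)$ and hence $|\supp s_n(a)|$ is non-decreasing in $n$ and bounded above by $N$. I would then record that the \emph{excess} $\norm{t(a)}_1 = N - |\supp a|$ is a non-negative integer which is non-increasing along the iteration (because $N$ is constant and the support grows) and which vanishes precisely when $s_n(a)$ is $0,1$-valued. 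The whole claim thus reduces to bounding, purely in terms of $N$, the first $n$ at which this excess hits $0$.

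The heart of the argument is to show that while the excess is positive the support must strictly grow within at most $N$ steps. First I would check the bookkeeping: if the support stays constant, equal to a finite set $T$, across a block of steps, then on $T$ the base contributes exactly $1$ at each point, so writing $\tau_m = t(s_m(a))$ the recursion collapses to $\tau_{m+1}(x) = \sum_{\sigma(y)=x}\tau_m(y)$; that is, the tower mass is simply pushed one step forward along the flow $\sigma$, and constancy of the support forces this mass never to leave $T$. Thus a unit of tower mass sitting at $y\in T$ at the start of the block sits at $\sigma^{j}(y)$ after $j$ further steps and must stay in $T$ throughout. But property (2) of the flow makes the forward orbit $y,\sigma(y),\sigma^2(y),\dots$ proper, hence its points are distinct (a repeat would produce a periodic orbit confined to a bounded set), so $y,\sigma(y),\dots,\sigma^{|T|}(y)$ are $|T|+1$ distinct points and at least one lies outside $T$. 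Since $|T|\le N$, any tower mass present must escape $T$, and thereby enlarge the support, within $N$ steps, contradicting constancy.

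Combining the two pieces finishes the proof. Starting from $|\supp a|\ge 1$ and capped at $N$, the support can strictly increase at most $N-1$ times, and by the previous paragraph each period of constant support with positive excess lasts fewer than $N$ steps; hence the excess reaches $0$ after at most $N(N-1)$ steps and stays there, so $s_n(a)$ is $0,1$-valued for all $n\ge N^2$, a bound depending only on $\norm{a}_1$ as required.

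I expect the main obstacle to be this key step, certifying that positive excess cannot persist, since it is the only place that genuinely uses the geometry: one must correctly observe that a stationary support forces the excess mass to recirculate within a finite set, and then invoke properness of $\sigma$ to rule this out. Everything else, namely norm preservation, monotonicity and boundedness of the support, and the final counting, is routine once this recirculation obstruction is in hand.
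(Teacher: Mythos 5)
Your proof is correct and takes essentially the same route as the paper: both arguments track the non-increasing excess $\norm{t(s_n(a))}_1=\norm{a}_1-|\supp s_n(a)|$ and show, via properness of $\sigma$-orbits (hence distinctness of the points $\sigma^j(y)$) plus pigeonhole on the support of size at most $\norm{a}_1$, that positive excess forces the support to grow strictly within $\norm{a}_1$ steps, then iterate. The only difference is packaging: the paper directly follows a single tower point's orbit until it first exits $\supp a$ (bound $\norm{a}_1\norm{t(a)}_1$), whereas you argue by contradiction from a stationary support block via the push-forward recursion $\tau_{m+1}(x)=\sum_{\sigma(y)=x}\tau_m(y)$ (bound $\norm{a}_1^2$) --- an immaterial distinction.
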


Note that once $s_n(a)$ is $0,1$-valued, it follows that $s_1(s_n(a))=s_n(a)$ so the sequence is ultimately constant.

\medskip

\noindent\textit{Proof of Claim \ref{claim2}:} Note first that the support of $s_n(a)$ is a non-decreasing sequence of sets and, since $s_n$ preserves the $\ell^1$ norm, $s_n(a)$ is $0,1$-valued if and only if the support of $s_n(a)$ has cardinality $\norm{a}_1$.

Let $x \in \supp t(a)$. Consider the least $n$ such that $\sigma^n(x)$ does not lie in the support of $a$ and let $y=\sigma^n(x)$. Such an $n$ must exist by properness of the sequence $\sigma^n(x)$. Note that since $\sigma^k(x)$ lies in $\supp a$ for all $k<n$, and these points are all distinct (again by properness), it follows that $n$ is at most the cardinality of $\supp a$ which in turn is bounded by $\norm{a}_1$.

For each $k=0,1,\dots, n-1$, we have $\sigma^k(x)$ in the support of $t(s_k(a))$: this is true by definition when $k=0$ and by induction for all $k$. It follows that $y=\sigma^n(x)$ lies in the support of $s_n(a)$ (though depending on the value of $t(a)(x)$ it may not itself be a tower).

Since $y$ lies in the support of $s_n(a)$ but not the support of $a$ we deduce that the support of $s_n(a)$ is strictly greater than that of $a$. Hence
\[
\norm{s_n(a)}_1-|\supp s_n(a)| = \norm{t(s_n(a))}_1 < \norm{a}_1-|\supp a| = \norm{t(a)}_1.
\]
Hence repeating the process at most $\norm{t(a)}_1$ times reduces the difference to zero and the resulting function is $0,1$-values as required. Specifically this holds provided that $n\geq \norm{a}_1\norm{t(a)}_1$, which concludes Claim \ref{claim2}.

\bigskip

We now write $s_\infty(a)$ for the value of $s_n(a)$ when $n\geq  \norm{a}_1\norm{t(a)}_1$. 
Note that $s_\infty$ is also a positive map because if $a\leq a'$ then for $n$ sufficiently large $s_\infty(a)=s_n(a)\leq s_n(a')=s_\infty(a')$. Hence:
\begin{equation}\label{EQ:estimate}
s_\infty(a_x \land a_y) \leq s_\infty(a_x) \land s_\infty(a_y).
\end{equation}
We will write $a\setminus b:=a-(a\land b)=(a-b)\lor 0$, where $\lor$ denotes the pointwise maximum.

\begin{claim}\label{claim3}
$\norm{s_\infty(a_x) \setminus s_\infty(a_y)}_1 \leq \norm{a_x \setminus a_y}_1$.
\end{claim}

To see this we write
\begin{align*}
\norm{a_x}_1&=\norm{(a_x-a_x \land a_y)+a_x \land a_y}_1\\
&\leq\norm{a_x-a_x \land a_y}_1+\norm{a_x \land a_y}_1\\
&=\norm{a_x \setminus a_y}_1+\norm{s_\infty(a_x \land a_y)}_1\\
&\leq \norm{a_x \setminus a_y}_1+\norm{s_\infty(a_x)\land s_\infty(a_y)}_1
\end{align*}
by (\ref{EQ:estimate}). Now note that
\begin{align*}
\norm{s_\infty(a_x) \setminus s_\infty(a_y)}_1
&=\norm{s_\infty(a_x) - s_\infty(a_x)\land s_\infty(a_y)}_1\\
&=\norm{s_\infty(a_x)}_1 - \norm{s_\infty(a_x)\land s_\infty(a_y)}_1\\
&=\norm{a_x}_1 - \norm{s_\infty(a_x)\land s_\infty(a_y)}_1\\
&\leq \norm{a_x \setminus a_y}_1.
\end{align*}

\bigskip

From Claim \ref{claim3} it follows that
\[
\norm{s_\infty(a_x)-s_\infty(a_y)}=\norm{s_\infty(a_x) \setminus s_\infty(a_y)}_1+\norm{s_\infty(a_y) \setminus s_\infty(a_x)}_1\leq \norm{a_x-a_y}_1
\]
while, again by (\ref{EQ:estimate}) we have $\norm{a_x \land a_y}_1 \leq \norm{s_\infty(a_x) \land s_\infty(a_y)}_1$.
So if $d(x,y)\leq R$ then
\[
\dfrac{\norm{s_\infty(a_x)-s_\infty(a_y)}_1}{\norm{s_\infty(a_x) \land s_\infty(a_y)}_1}\leq \dfrac{\norm{a_x-a_y}_1}{\norm{a_x \land a_y}_1} < \varepsilon.
\]

Now taking the supports of the $0,1$-valued functions, $\widetilde{A}_x:=\supp s_\infty(a_x)$ for $x\in X$, we have:
\[
\frac{|\widetilde{A}_x \triangle \widetilde{A}_{y}|}{|\widetilde{A}_x \cap \widetilde{A}_{x}|}<\varepsilon.
\]
While Claim \ref{claim2}, combined with the observation that the support of $s_1(a)$ lies in the $r$-neighbourhood in $X$ of the support of $a$, ensures that each set $\widetilde{A}_x$ is supported uniformly close to $x$. Thus we have shown that $X$ has naive property A.
\end{proof}

Note that the aim in both the proof above and that given in Section \ref{sec:unif finite homology} is to achieve a uniform bound on the heights of the towers which is \emph{independent of $R,\varepsilon$}; of course for fixed $R,\varepsilon$ we have such a bound.  In Section \ref{sec:unif finite homology} this uniform bound was provided by the uniformity of the tails: the flow provided by these has the property that for any given point, there is a uniform bound on the number of other points whose towers will flow through this. Therefore in that case it is sufficient to flow by the height of the towers. By contrast in the general case there is no uniform bound on the number of towers being pushed onto any given point by the flow, therefore instead we need to continue flowing until we achieve the required uniform bound (in this case $1$) for the heights of the towers.  The time required to achieve this is bounded using the bounded geometry of the space.

\section{Spaces with bounded coarse components}

Our theorem leaves open the question of whether naive property A is equivalent to property A for spaces which are a coarse disjoint union of bounded pieces (with the distance between pieces tending to infinity). We finish by considering a couple of natural examples arising from groups.

Consider first the example of $\bigoplus_{n\in\NN} \ZZ/2$ with proper metric.  This is not covered by Theorem \ref{naive A=A} since every connected component of each Rips complex is bounded.  However it is of course covered by Theorem \ref{thm:group case}. Indeed it is amenable so has naive property A.

Secondly we consider box spaces associated to residually finite groups with a given tower of normal subgroups. Again Theorem \ref{naive A=A} does not apply, however in these examples the equivalence of property A and naive property A is again true:

\begin{proof}[Proof of Theorem \ref{thm:box space case}]
We only need to show the sufficiency. It is well-known that $X$ has property A if and only if $G$ is amenable. Hence for any $\varepsilon>0$ and $R>0$, there exists a finite subset $F \subseteq G$ such that $|gF \triangle hF| < \varepsilon |gF \cap hF|$ when $d_G(g,h)\leq R$.

Set $S = \max\{\diam(F), d_G(e,F)\}$. By the assumption, we know that there exists $J \in \mathbb{N}$ such that for any $j>J$ the quotient map $\pi_j: G \to G/N_j$ is an isometry restricted to any ball with radius at most $R+2S$. Enlarging $J$ if necessary, we can also assume that $d(G/N_j, G/N_l) > R$ for any $j,l > J$.

Now for $\bar{g} \in G/N_j$, we set
\[
A_{\bar{g}}:=
\begin{cases}
\bar{g}\cdot \pi_j(F)& \text{ if } j>J\\
\bigsqcup_{i=1}^J G/N_i& \text{ if }j\leq J.
\end{cases}
\]
We show that these $A_{\bar{g}}$ provide naive property A sets for $R,\varepsilon$.

First note that for any $g\in G$ we have $d_G(g, gF) = d_G(e,F) \leq S$, which implies that $d(\pi_j(g), \pi_j(gF)) \leq S$ for any $j$. On the other hand, for any $\bar{g}, \bar{h} \in G/N_j$ with $d(\bar{g}, \bar{h}) \leq R$ for some $j>J$, we can choose their representatives $g,h \in G$ satisfying $d_G(g,h) \leq R$. Note that $\diam(gF \cup hF) \leq R+2S$, hence the quotient map $\pi_j$ is an isometry on $gF \cup hF$. Therefore, we obtain:
\[
|A_{\bar{g}} \triangle A_{\bar{h}}| = |\pi_j(gF) \triangle \pi_j(hF)| = |\pi_j(gF \triangle hF)| = |gF \triangle hF|,
\]
and similarly,
\[
|A_{\bar{g}} \cap A_{\bar{h}}| = |gF \cap hF|.
\]
Combining these, we complete the proof.
\end{proof}

\bibliographystyle{plain}
\bibliography{NaiveA}

\end{document}